\def\hH{\wh{H}}
\def\deg{\text{deg}}
\newcommand{\hatt}[1]{\widehat #1}
\def\hR{\hatt{R}}
\def\a{\alpha} 
\def\b{\beta} 
\def\e{\epsilon} \def\f{\phi} \def\F{{\Phi}}  \def\g{\gamma}
\def\G{\Gamma}  \def\k{\kappa}
\def\z{\zeta}     \def\l{\lambda}
   \def\p{\pi}
\def\r{\rho}  \def\s{\sigma} 
\def\t{\tau} \def\om{\omega}  \def\Om{\Omega}
\def\cT{{\cal T}}
\newtheorem{theorem}{Theorem}
\newtheorem{lemma}[theorem]{Lemma}
\newtheorem{Remark}{Remark}
\def\cW{{\mathcal W}}
\def\cT{{\mathcal T}}
\newcommand{\ul}[1]{\mbox{\boldmath$#1$}}
\newcommand{\ol}[1]{\overline{#1}}
\newcommand{\wh}[1]{\widehat{#1}}
\newcommand{\rdown}[1]{{\left\lfloor #1\right \rfloor}}
\newcommand{\brac}[1]{\left(#1\right)}
\newcommand{\bfrac}[2]{\left(\frac{#1}{#2}\right)}
\newcommand{\ra}{\rightarrow}
\newcommand{\rat}{{\textstyle \ra}}
\newcommand{\set}[1]{\left\{#1\right\}}
\def\E{\mbox{{\bf E}}}
\def\Pr{\mbox{{\bf Pr}}}
\newcommand{\ignore}[1]{}
\newcommand{\cA}{{\cal A}}
\newcommand{\card}[1]{\left|#1\right|}
\newcommand{\beq}[2]{\begin{equation}\label{#1}#2\end{equation}}
\def\cG{\mathcal{G}}
\def\bd{{\bf d}}
\def\num{s}
\def\w{{\text w}}
\def\cZ{{\cal Z}}
\newcommand{\multstar}[1]{\begin{multline*}#1\end{multline*}}
\newcommand{\mult}[2]{\begin{multline}\label{#1}#2\end{multline}}
\author{Colin Cooper\thanks{Research supported in part by EPSRC grant EP/M005038/1} \and Alan Frieze\thanks{Research supported in part by NSF grant DMS1362785
} \and Wesley Pegden\thanks{Research supported in part by NSF grant DMS1363136}}
\begin{document}
\title{On the cover time of dense graphs}
\maketitle
\begin{abstract}
  We consider arbitrary graphs $G$ with $n$ vertices and minimum degree at least $\delta n$ where $\delta>0$ is constant.\\
(a) If the conductance of $G$ is sufficiently large then we obtain an asymptotic expression for the cover time $C_G$ of $G$ as the solution to an explicit transcendental equation.\\
(b)  If the conductance is not large enough to apply (a), but the mixing time of a random walk on $G$ is of a lesser magnitude than the cover time, then we can obtain an asymptotic deterministic estimate via a decomposition into a bounded number of dense subgraphs with high conductance. \\
(c) If  $G$ fits neither (a) nor (b) then we give a deterministic asymptotic (2+o(1))-approximation of $C_G$.
\end{abstract}

\section{Introduction}
Let $G=(V,E)$ be a connected graph with vertex set  $V=[n]=\set{1,2,\ldots,n}$ and an edge set $E$ of $m$ edges. In a simple random walk $W$ on a graph $G$, at each step, a particle moves from its current vertex to a randomly chosen neighbour. For $v\in V$, let $C_v$ be the expected time taken for a simple random walk  starting at $v$ to visit every vertex of $G$. The {\em vertex cover time} $C_G$ of $G$ is defined as $C_G=\max_{v\in V}C_v$. The (vertex) cover time of connected graphs has been extensively studied. It is a classic result of Aleliunas, Karp, Lipton, Lov\'asz and Rackoff \cite{AKLLR} that $C_G \le 2m(n-1)$. It was shown by  Feige \cite{Feige1}, \cite{Feige2}, that for any connected graph $G$, the cover time satisfies $(1-o(1))n\log n\leq C_G\leq (1+o(1))\frac{4}{27}n^3.$ As an example of a graph achieving the lower bound, the complete graph $K_n$ has cover time determined by the Coupon Collector problem. The {\em lollipop} graph consisting of a path of length $n/3$ joined to a clique of size $2n/3$ gives the asymptotic upper bound for the cover time.

It follows from \cite{AKLLR} that there is a very simple randomised algorithm for estimating the cover time. Simply execute enough random walks so that the average cover time can be used as an estimate. It is much more challenging to estimate the cover time deterministically in polynomial time. A theorem of Matthews \cite{Matt} gives a deterministic $O(\log n)$ approximation. This was improved to $O((\log\log n)^2)$ by Kahn, Kim, Lov\'asz and Vu \cite{KKLV}. In a breakthrough, Ding, Lee and Peres \cite{DLP} improved this to $O(1)$ using a remarkable connection between the cover time and Gaussian free fields. Subsequently, Ding \cite{Ding} has improved the factor of approximation to $1+o(1)$, as $n\to\infty$ for trees and bounded degree graphs. Zhai \cite{Z} has recently shown that if the maximum hitting time is asymptotically smaller than the cover time then the approximation ratio is $1+o(1)$, implying the results of \cite{DLP} and \cite{Ding}.  An important point to note here is that Meka \cite{Meka} gives a polynomial time approximation scheme for finding the supremum of a Gaussian process. This is what provides the computational underpinning for the results of \cite{DLP}, \cite{Ding} and \cite{Z}. We note that none of these results give an explicit value of the  cover time as a function of the number of vertices $n$ or imply a deterministic polynomial time approximation scheme for the cover time.

The first two authors of this paper have studied the cover time of various models of a random graph, see \cite{CFreg,CFweb,CFgiant,Dnp}. The main tool in their analysis has been the ``First Visit Lemma'', see Lemma \ref{MainLemma}. In this paper we see how this lemma can be used  deterministically to give good estimates of the cover time of dense graphs when the mixing time is asymptotically smaller than the cover time.

Let $\cG(n,\theta)$ denote the set of connected graphs with vertex set $[n]$ and minimum degree at least $\theta n$. Our first result deals with the simplest case, where the mixing time of a random walk on our graph is sufficiently small. Subsequent theorems will consider more general cases.

{\bf Notation:} The degree sequence of the graph $G=(V,E),\,|V|=n,$ will always be $\bd=(d_1,d_2,\ldots,d_n)$ so that $2m=\sum_{i=1}^nd_i$. For $S\subseteq V$ we let $d(S)=\sum_{i\in S}d_i$ and $e(S)=\set{\set{v,w}\in \binom{S}{2}\cap E}$.

For two sequences $A_n,B_n$ we write $A_n=(1\pm\e)B_n$ if $(1-\e)B_n\leq A_n\leq (1+\e)B_n$ for $n$ sufficiently large. For two sequences $A_n,B_n$ we write $A_n\approx B_n$ if $A_n=(1+o(1))B_n$ as $n\to\infty$. We will write $A_n\gg B_n$ or $B_n\ll A_n$ to mean that $A_n/B_n\to\infty$ as $n\to\infty$.

For $S,T\subseteq V,\,S\cap T=\emptyset$  let $e(S,T)=e_G(S,T)=\set{\set{v,w}\in E:v\in S,w\in T}$,  let $\ol S=V\setminus S$ and
\begin{equation}\label{CondS}
\F(S)=\F_G(S)=\frac{e(S,\ol S)d(V)}{d(S)d(\ol S)}.
\end{equation}

The {\em conductance } $\F(G)$ of $G$ is given by
$$\F(G)=\min_{0<d(S)\leq m}\F_G(S).$$
We will make our walk {\em lazy} and ergodic by adding a loop at each vertex so that the walk stays put with probability 1/2 at each step. This has the effect of (asymptotically) doubling the cover time and the extra factor of two can be discarded. (Ergodicity only requires a small probability of staying in place, but laziness allows us to use conductance to estimate the mixing time. See \eqref{mix}.) A simple random walk has a steady state of
\beq{steady}{
\p_i=\frac{d_i}{2m},i\in [n] \text{ and if } G\in\cG(n,\theta) \text{ then }\frac{\theta}{n}\leq \p_i\leq \frac{1}{\theta n}.
}
Next let
\beq{deft}{
F(t)=\sum_{v\in V}\frac{e^{-\p_vt}}{\p_v}\quad\text{ and so }\quad F'(t)=-\sum_{v\in V}e^{-\p_vt}.
}
Note that $F$ is monotone decreasing and $F'$ is monotone increasing and that $F$ is convex.
Next let
\beq{e0}{
\psi=\frac{1}{\log^{2/3}n}.
}
\begin{theorem}\label{th1}
Let $\e>0$ be arbitrary and suppose that $G\in \cG(n,\theta)$ where $\theta=\Omega(1)$. Suppose that $\F=\F(G)\geq n^{-\theta\psi}$. Then there exists $n_\e$ such that if $n\geq n_\e$ then
\begin{equation}\label{ft1}
C_G=(1\pm\e)t^*
\end{equation}
where $t^*$  is the unique solution to $F'(t)=-1$,  (see \eqref{deft}).
\end{theorem}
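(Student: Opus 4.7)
The plan is a standard first- and second-moment analysis of the set $U_t=V\setminus W[0,t]$ of vertices not yet visited by time $t$, combined with the First Visit Lemma (Lemma~\ref{MainLemma}) to compute $\Pr[v\in U_t]$ for each $v\in V$. The hypothesis $\F(G)\geq n^{-\theta\psi}$ with $\psi=\log^{-2/3}n$ (see~\eqref{e0}) bounds the mixing time $T_{\mathrm{mix}}$ of the lazy walk via the standard conductance estimate $T_{\mathrm{mix}}=O(\F^{-2}\log n)=n^{o(1)}$, placing it far below the cover time. From~\eqref{steady} we have $\p_v\in[\theta/n,1/(\theta n)]$, and together with $\sum_v e^{-\p_v t^*}=1$ this forces $t^*=\Theta(n\log n)$, so $\p_v t^*=\Theta(\log n)$ uniformly in $v$.

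For the upper bound, I would apply Lemma~\ref{MainLemma} to each $w\in V$ to obtain $\Pr[w\in U_t]=(1+o(1))e^{-\p_w t}$ uniformly, and sum over $w$ to get $\E|U_t|=-(1+o(1))F'(t)$ via~\eqref{deft}. Writing $a_w=e^{-\p_w t^*}$, the dense-graph bounds on $\p_w$ force each $a_w$ to be a negative power of $n$, and since $\sum_w a_w=1$ we obtain $-F'((1+\e)t^*)=\sum_w a_w^{1+\e}\leq n^{-\Omega(\e)}$, and more generally $-F'(t)\leq n^{-\Omega(t/t^*-1)}$ for $t\geq t^*$. Summing the Markov bound $\Pr[T_{\text{cov}}>t]\leq\E|U_t|$ over $t\geq(1+\e)t^*$ then shows that the tail contribution to $\E[T_{\text{cov}}]$ beyond $(1+\e)t^*$ is $o(t^*)$, hence $C_G\leq(1+\e)t^*(1+o(1))$.

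For the lower bound I would take $t=(1-\e)t^*$, where the same calculation gives $\E|U_t|=(1+o(1))\sum_w a_w^{1-\e}\to\infty$, and apply Chebyshev's inequality. To control $\Pr[u,w\in U_t]$ for $u\ne w$, I extend Lemma~\ref{MainLemma} to pairs and use $T_{\mathrm{mix}}\ll t$ to decouple the two ``not-visited'' events, expecting $\Pr[u,w\in U_t]\leq(1+o(1))e^{-(\p_u+\p_w)t}$. This yields $\Var|U_t|=o((\E|U_t|)^2)$, so $\Pr[U_t\ne\emptyset]\to 1$, and therefore $C_G\geq(1-\e)t^*(1-o(1))$.

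The main obstacle is the precision required of Lemma~\ref{MainLemma}: each $e^{-\p_w t^*}$ is only polynomially small in $n$, so any constant-factor slippage inside the exponent would ruin the $(1\pm\e)$ precision of~\eqref{ft1}. This forces the effective return time entering the First Visit Lemma to equal $(1+o(1))/\p_w$ uniformly in $w$, which is precisely what the strong conductance hypothesis $\F(G)\geq n^{-\theta\psi}$ delivers through its control of the mixing time. The pair version of Lemma~\ref{MainLemma} needed for the variance estimate is a further technical layer, but follows the same decoupling template.
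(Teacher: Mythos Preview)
Your proposal is correct and follows essentially the same route as the paper: verify the hypotheses of the First Visit Lemma (using the conductance bound to get $T_{\mathrm{mix}}=n^{o(1)}$ and hence $R_v=1+o(1)$), then run a first-moment argument for the upper bound and a second-moment argument for the lower bound. The paper's bookkeeping is marginally different---it cuts the upper-bound sum at $t^*$ itself and invokes the identity $F(t^*)=o(t^*)$ rather than your $a_w^{1+\e}$ estimate, and it takes $\e_1=\log^{-1/2}n$ rather than the fixed $\e$ for the lower bound---but these are cosmetic. The one place where the paper is more concrete than your sketch is the pair estimate: rather than a generic ``extend Lemma~\ref{MainLemma} to pairs,'' it contracts $v,w$ to a single vertex $\gamma$ in a collapsed graph $\Gamma$, observes that the relaxation time of $\Gamma$ is no larger than that of $G$, and then applies the single-vertex First Visit Lemma to $\gamma$ with $\pi_\gamma=\pi_v+\pi_w$ to obtain $\Pr(\cA_{t}(v)\wedge\cA_{t}(w))\le(1+o(1))e^{-(\pi_v+\pi_w)t}$ directly.
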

Thus, if $G$ is regular and satisfies the conditions of Theorem \ref{th1} then $C_G\approx n\log n$. Also, if $G$ is regular of degree $\theta n$ where $\theta>1/2$, then the conditions of Theorem \ref{th1} will be satisfied. Indeed,  the condition that $d(S)\leq m$ in the definition of conductance is equivalent to $|S|\leq n/2$ and then
$$\F(S)\geq \frac{\brac{\theta-\frac12}n|S|d(V)}{n|S|d(V)}=\theta-\frac12.$$
\begin{Remark}\label{reem}
Note also, that while it may be difficult to compute $\F(G)$ exactly in deterministic polynomial time, we can approximate it to within an $O(\log n)$ factor using the algorithm of Leighton and Rao \cite{LR}. Thus if $\F(G)\gg n^{-\theta\psi}\log n$ then there is a deterministic polynomial time algorithm that verifies that $G$ satisfies the conditions of Theorem \ref{th1} and  gives  a $(1+\e)$-approximation to the cover time.
\end{Remark}
The proof of Theorem \ref{th1} is given in Section \ref{easy} and closely follows the lines of the proofs for random instances.

Suppose that we start our walk $\cW_u=(u=\cW_u(0),\cW_u(1),\ldots,\cW_u(t),\ldots)$ at vertex $u$ and that $P_{u}^{(t)}(x)=\Pr(\cW_u(t)=x)$. Let 
\[
d(t)=\max_{u,x\in V}|P_{u}^{(t)}(x)-\pi_x| ,
\] 
and let $T_{mix}=T_{mix}(\om)$ be such that, for $t\geq T_{mix}$
\begin{equation}\label{mix0}
\max_{u,x\in V}\card{\frac{P_{u}^{(t)}(x)-\pi_x}{\pi_x}} \leq \frac{1}{\om}
\end{equation}
where $\om=\om(n)\to\infty$. We will assume that
\beq{defom}{
\om=n^{3\theta\psi}.
}
If the conditions of Theorem \ref{th1} fail, then we  partition the vertex set $V$ into $O(1)$ subsets which satisfy the conditions of Theorem \ref{th1}. If furthermore,  our {\em mixing time}
$$T_{mix}=o(C_G)$$
then we will obtain a $(1+\e)$-approximation to the cover time.

\begin{Remark}\label{reem1}
Note that by examining the powers of the transition matrix $P$, we can determine the mixing time $T_{mix}(u,\om),u\in V$ in deterministic polynomial time. We note that $T_{mix}(u)=O(n^3)$, as long as the accuracy  needed in \eqref{mix0} is at most $ 1/\om=e^{-poly(n)}$, see \cite{LPW} (Proposition 10.28).  In which case we  only need to compute a $poly(n)$ power of $P$.
\end{Remark}

\begin{theorem}\label{th2}
Let $\e>0$ be arbitrary and suppose that $G\in \cG(n,\theta)$ where $\theta=\Omega(1)$. Then in deterministic polynomial time we can find a partition of $V$ into subsets $V_i,i=1,2,\ldots,s$ where $s=O(1)$, where the induced subgraphs $G[V_i]$ satisfy the conditions of Theorem \ref{th1}, and have cover time $C_i$ which can be computed via Theorem \ref{th1}.

Suppose {furthermore that} $T_{mix}=o(C)$, where $C$ is given by
\beq{CG}{
C=\max\set{\frac{C_i}{\p(V_i)}:i\in [s]},
}
then $C_G=(1\pm\e) C$ .
\end{theorem}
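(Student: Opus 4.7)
The plan splits into two phases: constructing the partition, and then using Lemma~\ref{MainLemma} to convert it into an estimate for $C_G$.

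For the partitioning phase, start with $\{V\}$ and iteratively refine. While some current part $V_i$ fails the conductance hypothesis of Theorem~\ref{th1}, run the Leighton--Rao algorithm \cite{LR} on $G[V_i]$ to obtain a witness cut $S$ with $\F_{G[V_i]}(S)$ small and replace $V_i$ by $S$ and $V_i\setminus S$. To see that this terminates with $s=O(1)$ pieces each still in $\cG(|V_i|,\theta')$ for some $\theta'=\Omega(1)$, use the minimum-degree hypothesis: for any $S$ inside a current part $V_i$ with $|S|\le \a |V_i|$ for sufficiently small absolute $\a>0$, one has $e_G(S,V_i\setminus S)\ge |S|(\theta n-|S|)=\Omega(|S|n)$ because every vertex of $G$ has degree at least $\theta n$; substituting into \eqref{CondS} yields $\F_{G[V_i]}(S)=\Omega(1)$, well above the Theorem~\ref{th1} threshold. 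Consequently every cut produced by the algorithm is balanced, each piece keeps size $\Omega(n)$ and minimum degree $\Omega(|V_i|)$, and the number of pieces stays bounded by the inverse of that constant.

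For the cover time analysis, observe that at the moment $V_i$ was split off, $\F_G(V_i)$ was tiny, so $e_G(V_i,\bar V_i)\ll d(V_i)$ and hence for most $v\in V_i$ one has $d_v^G\approx d_v^{G[V_i]}$. This gives
\[
\pi_v^G\approx \pi(V_i)\cdot \pi_v^{G[V_i]}
\]
for most $v\in V_i$. Applying Lemma~\ref{MainLemma} to the walk on $G$, and using $T_{mix}=o(C)$, for any $t=\Theta(C)$ and vertex $v$,
\[
\Pr(T_v^G>t)\approx \exp\!\bigl(-\pi_v^G t(1+o(1))\bigr)\approx \exp\!\bigl(-\pi_v^{G[V_i]}\pi(V_i)t\bigr).
\]
By Theorem~\ref{th1}, $\sum_{v\in V_i}\exp(-\pi_v^{G[V_i]}s)\approx 1$ at $s=C_i$, so $\sum_{v\in V_i}\Pr(T_v^G>t)\approx 1$ at $t=C_i/\pi(V_i)$.

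For the upper bound, set $t=(1+\e)C$; then for each $i$, $\pi(V_i)t\ge (1+\e)C_i$, so $\sum_{v\in V_i}\Pr(T_v^G>t)=o(1)$, and since $s=O(1)$, $\sum_{v\in V}\Pr(T_v^G>t)=o(1)$; a standard tail-integration argument bounds $\E[\text{cover time}]\le (1+\e)C+o(C)$. For the lower bound, let $i^*$ achieve the maximum; at $t=(1-\e)C$ the sum over $V_{i^*}$ tends to $\infty$, and a second-moment argument on the number of unvisited vertices in $V_{i^*}$ (as in \cite{CFreg,CFweb}) shows that some such vertex is unvisited with probability bounded away from $0$, whence $C_G\ge (1-\e)C$. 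The main obstacle is justifying the approximation $\pi_v^G\approx \pi(V_i)\pi_v^{G[V_i]}$ with enough uniformity: boundary vertices could distort the per-vertex estimate, so one must combine the small total boundary $e(V_i,\bar V_i)$ with the fact that the cover time is driven by vertices of smallest stationary probability, which are typically well inside the pieces and thus unaffected by the boundary error.
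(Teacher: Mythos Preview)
There are two genuine gaps, one in each phase.

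\textbf{Partitioning.} Your refinement simply replaces $V_i$ by $S$ and $V_i\setminus S$. But a vertex $v\in S$ can have almost all of its $V_i$-neighbours in $V_i\setminus S$ even when the cut has small conductance, so after the split $\deg_{G[S]}(v)$ can be $o(n)$ and $G[S]\notin\cG(|S|,\theta')$ for any $\theta'=\Omega(1)$. The paper fixes this by moving such vertices across the cut: it sets $Y_\ell=\{v\in X_\ell:\deg_{X_\ell}(v)\le\deg_{X_{3-\ell}}(v)\}$ and replaces $X_\ell$ by $(X_\ell\cup Y_{3-\ell})\setminus Y_\ell$. Because the cut has at most $\z n^2$ edges, $|Y_\ell|=o(n)$, and this single move guarantees minimum degree $\ge\b_d n$ at depth $d$ with $\b_d=\theta/3^d$; together with a size bound $|V_i|\ge\theta n/2$ this forces depth $<2/\theta$ and hence $s=O(1)$. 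Your conductance argument for small $S\subset V_i$ also breaks at depth $\ge 1$: the inequality $e(S,V_i\setminus S)\ge |S|(\theta n-|S|)$ uses degree in $G$, but vertices of $S$ can have many neighbours outside $V_i$.

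\textbf{Cover time analysis.} Your plan applies Lemma~\ref{MainLemma} directly to the walk on $G$. Condition~(b) of that lemma requires $T_{mix}(G)\,\p_v=o(1)$, but the hypothesis $T_{mix}=o(C)=o(n\log n)$ only gives $T_{mix}\p_v=o(\log n)$; for, say, $T_{mix}=n\sqrt{\log n}$ the lemma is inapplicable. The paper circumvents this by passing to the \emph{induced chain} $\cZ_i$: add to $G[V_i]$ weighted edges $(v,w)$ with weight $\r_{v,w}$ equal to the probability that an excursion of $\cW$ from $v$ through $\ol{V_i}$ first returns at $w$, obtaining a multigraph $H_i$. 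Then $\cZ_i$ is exactly the random walk on $H_i$, it has steady state $\p_{v,i}=\deg_G(v)/\deg_G(V_i)=\p_v^G/\p(V_i)$ \emph{exactly}, and $H_i$ inherits high conductance from the partitioning, so Theorem~\ref{th1} applies to $H_i$ with its own (small) mixing time and gives $C_i$. The hypothesis $T_{mix}(G)=o(C)$ is used only once, and differently: Paulin's Markov-chain concentration inequality shows that the number of $V_i$-steps in $t$ steps of $\cW$ is $(1+o(1))t\p(V_i)$ w.h.p., whence $\cW$ covers $V_i$ at time $(1+o(1))C_i/\p(V_i)$. Note that this route makes your boundary-vertex worry about $\p_v^G\approx\p(V_i)\p_v^{G[V_i]}$ disappear: the identity $\p_{v,i}=\p_v^G/\p(V_i)$ holds for \emph{every} vertex, so no per-vertex approximation is needed.
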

The construction of this partition is described in Section \ref{Pgraph}.

Finally, if $T_{mix}$ is too large for Theorem \ref{th2} to apply then we do not have a nice expression for $C_G$, but instead we have

\begin{theorem}\label{th3}
Let $\e>0$ be arbitrary and suppose that $G\in \cG(n,\theta)$ where $\theta=\Omega(1)$. Then in deterministic polynomial time we can compute an estimate $\ol{C}_G$ such that if $n\geq n_\e$ then
\beq{eth3}{
\ol{C}_G\leq C_G\leq (2+o(1))\ol{C}_G.
}
\end{theorem}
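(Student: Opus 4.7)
The natural strategy is to reuse the partition construction of Theorem~\ref{th2}, augmenting it with explicit hitting-time data on $G$ to compensate for the loss of fast mixing.

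First, I would apply the partition construction of Theorem~\ref{th2} in polynomial time to obtain $V_1,\dots,V_s$ with $s=O(1)$, where each $G[V_i]$ satisfies the hypotheses of Theorem~\ref{th1}, so that the cover time $C_i$ of $G[V_i]$ is computable via Theorem~\ref{th1}. For each $i$ I would also compute, in polynomial time (by solving a standard linear system of size $n$), the worst-case hitting time $T_i=\max_{u\in V}H(u,V_i)$ on the whole graph $G$. I would then set
\[
\ol C_G \;=\; \max_{i\in [s]}\left(T_i+\frac{C_i}{\pi(V_i)}\right),
\]
which is deterministically computable in polynomial time by Remark~\ref{reem1} combined with Theorem~\ref{th1}.

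For the lower bound $\ol C_G\le C_G$, let $i^\ast$ achieve the maximum and let $u^\ast$ be a vertex with $H(u^\ast,V_{i^\ast})=T_{i^\ast}$. Starting at $u^\ast$, the expected time to first enter $V_{i^\ast}$ is exactly $T_{i^\ast}$. After the first entry the walk still has to visit every remaining vertex of $V_{i^\ast}$; by the First Visit Lemma (Lemma~\ref{MainLemma}) applied inside $G[V_{i^\ast}]$, the total ``in-$V_{i^\ast}$'' time required is at least $(1-o(1))C_{i^\ast}$, and since each in-$V_{i^\ast}$ step corresponds on average to a $\pi(V_{i^\ast})$-fraction of global steps, the additional global time needed is at least $(1-o(1))C_{i^\ast}/\pi(V_{i^\ast})$.

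For the upper bound $C_G\le (2+o(1))\ol C_G$, I would decompose $C_G\le\max_i\E[\tau_i]+o(\ol C_G)$, where $\tau_i$ is the time until $V_i$ is covered. Splitting $\tau_i$ at the first visit to $V_i$, the pre-visit portion is at most $T_i$ by definition, while the post-visit portion is controlled via Lemma~\ref{MainLemma} inside $V_i$, together with bookkeeping of the excursions of the global walk out of $V_i$. This yields a bound of the shape $\E[\tau_i]\le T_i+(1+o(1))C_i/\pi(V_i)+O(T_i)$, where the extra $O(T_i)$ represents the total cost of a bounded number of forced traversals back into $V_i$; this is what gives the factor of $2$.

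The main obstacle is exactly this last step: controlling the excursions outside $V_i$ in the non-mixing regime, where one cannot invoke stationarity to argue that the walk returns to $V_i$ with the ``right'' frequency. Here the combination of $\theta=\Omega(1)$ (so that each $V_i$ carries stationary mass bounded below), the high conductance of each $G[V_i]$ (so that a typical excursion into $V_i$ lasts long enough to make meaningful progress on its cover), and $s=O(1)$ is essential to keep the number of aborted excursions $O(1)$ in expectation. Barbell-type graphs (two dense pieces joined across a bottleneck) show that a single forced traversal-and-return cycle is essentially unavoidable, which is what pins the constant at $2+o(1)$; improving the constant to $1+o(1)$ would require a far more delicate analysis of inter-piece excursion statistics, which is precisely what the decomposition of Theorem~\ref{th2} was designed to sidestep.
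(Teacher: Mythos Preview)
Your proposal has a genuine gap, and it is precisely the one you yourself flag at the end. Your estimate
\[
\ol C_G=\max_{i\in[s]}\Bigl(T_i+\frac{C_i}{\pi(V_i)}\Bigr)
\]
bakes in the conversion ``$C_i$ in-$V_i$ steps $\approx C_i/\pi(V_i)$ global steps'', which is exactly the ergodic/stationarity statement that Theorem~\ref{th2} obtains from Paulin's concentration inequality under $T_{mix}=o(C)$. In the regime of Theorem~\ref{th3} this conversion is unavailable: if $V_{i^\ast}$ is very poorly connected to its complement, then once the walk enters $V_{i^\ast}$ it may stay there for essentially the whole covering of $V_{i^\ast}$, so the additional global time after first entry is $(1+o(1))C_{i^\ast}$, not $(1+o(1))C_{i^\ast}/\pi(V_{i^\ast})$. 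Your lower-bound inequality $\ol C_G\le C_G$ therefore has no proof; the sentence ``each in-$V_{i^\ast}$ step corresponds on average to a $\pi(V_{i^\ast})$-fraction of global steps'' is a restatement of mixing, not an argument. The same unjustified conversion appears in your upper bound, where the term $(1+o(1))C_i/\pi(V_i)$ is asserted without any control on excursion lengths.

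The paper avoids this entirely by \emph{not} trying to convert local time to global time via $\pi(V_i)$. Instead it computes, in polynomial time via powers of a transfer matrix $Q$ with indeterminates $\xi_1,\dots,\xi_s$ tracking which piece each step originates in, the quantity $\kappa(u,\tau,i)=$ number of global steps of $\cW_u$ needed to accumulate $\tau$ steps inside $V_i$. The estimate is then
\[
\ol C_G=\max_{u\in V}\E\Bigl(\max_{i\in[s]}\kappa(u,C_i^-,i)\Bigr),\qquad C_i^-=(1-\e_2)C_i,
\]
using the concentration of the cover time of the induced walk $\cZ_i$ (Lemma~\ref{rem2}). The lower bound is immediate: whp $V_i$ is not yet covered after only $C_i^-$ in-$V_i$ steps. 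The factor $2$ in the upper bound comes from a clean doubling argument: at the random time $\max_i\kappa(u,C_i^-,i)$ the walk is at some vertex $v$, and after a further $\max_i\kappa(v,C_i^-,i)$ steps every $V_i$ has seen at least $2C_i^->C_i^+$ local steps, hence is covered whp. No hitting times, no $\pi(V_i)$, and no control of excursion statistics is needed.
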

The proof of Theorem \ref{th2} uses a concentration inequality of Paulin \cite{Paulin}, which requires a sufficiently small mixing time. The proof of Theorem \ref{th3} uses the partition of Theorem \ref{th2}. It then describes how to use the transition matrix of the walk to give upper and lower estimates for the time needed to visit each $V_i$.
\section{First Visit Lemma}
Our main tool will be Lemma \ref{MainLemma} below. The lemma has been used several times in the context of random graphs, see for example \cite{CFreg,CFweb,CFgiant,Dnp}. We sharpen the proof to make it fit the current situation. Let $G$ denote a fixed connected graph, and let $u$ be some arbitrary vertex from which a walk $\cW_{u}$  is started. Let $\cW_{u}(t)$ be the vertex reached at step $t$, let $P$ be the matrix of transition probabilities of the walk, and let
$$h_t=P_{u}^{(t)}(v)=\Pr(\cW_{u}(t)=v).$$

It follows from e.g. Aldous and Fill \cite{AF}, Lemma 2.20, that $d(t)$ satisfies $d(s+t)\leq 2d(s)d(t)$  which implies that
\[
\max_{u,x\in V}|P_u^{(kT)}(x)-\p_x|\leq 2^{k-1}(\max_{u,x\in V}|P_u^{(T)}(x)-\p_x|)^k\leq 2^{k-1}\bfrac{\p_x}{\om}^k.
\]
And because $d(t)$ is monotone decreasing in $t$, for $t\geq T=T_{mix}$ and $k=\rdown{t/T}$, we have
\begin{equation}\label{4a}
\max_{u,x\in V}\card{\frac{P_{u}^{(t)}(x)-\pi_x}{\pi_x}} \leq \frac{2^{k-1}}{\om^{k}}.
\end{equation}
Fix two vertices $u,v$. Let
\begin{equation}\label{Hz}
H(z)=\sum_{t=T}^\infty h_tz^t
\end{equation}
generate $h_t$ for $t\geq T$.

Next, considering the  walk $\cW_v$, starting at $v$, let $r_t=\Pr(\cW_v(t)=v)$ be the probability  that this  walk returns to $v$ at step $t = 0,1,...$. Let
$$R(z)=\sum_{t=0}^\infty r_tz^t$$
generate $r_t$. Our definition of return includes $r_0=1$.

For $t\geq T$ let $f_t=f_t(u \rat v)$ be  the probability that the first visit of the walk $\cW_u$ to $v$ in the period $[T,T+1,\ldots]$ occurs at step $t$. Let
$$F(z)=\sum_{t=T}^\infty f_tz^t$$
generate  $f_t$.
Then we have
\begin{equation}
\label{gfw} H(z)=F(z)R(z).
\end{equation}
Finally, let
\begin{equation}
\label{Qs}
R_T(z)=\sum_{j=0}^{T-1} r_jz^j\text{ and }H_T(z)=\sum_{j=0}^{T-1} h_jz^j.
\end{equation}
Now fix $u\neq v\in V$. For a large constant $K>0$, let
\begin{equation}
\label{lamby}
\l=\frac{1}{KT}.
\end{equation}
For $t\geq 0$, let $\cA_t(v)$ be the event that $\cW_u$ does not visit $v$ in steps $T,T+1,\ldots,t$. The vertex $u$ will have to be implicit in this definition.
\begin{lemma}\label{MainLemma}
Suppose that
\begin{description}
\item[(a)]
For some constant $c >0$, we have
\beq{RTz}{
\min_{|z|\leq 1+\l}|R_T(z)|\geq c.
}
\item[(b)]
\beq{Tpi}{
T\pi_v\leq \om^{-1}=o(1).
}
\end{description}
Let $R_v=R_T(1)$. Then we can write
\begin{equation*}
p_v=\frac{\pi_v}{R_v(1+\xi_{v,1})}\text{ where }|\xi_{v,1}|=O(\om^{-1}).
\end{equation*}
And then for all $t\geq T$,
\begin{equation}
\label{frat}
\Pr(\cA_t(v))=\frac{1+\xi_{v,2}}{(1+p_v)^{t}} +o(Te^{-\l t/2})\text{ where }|\xi_{v,2}|=O(\om^{-1}).
\end{equation}
\end{lemma}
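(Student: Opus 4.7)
The plan is to carry out a singularity analysis of $F(z)=H(z)/R(z)$. Specifically, I will show that the generating function $A(z)=\sum_{t\geq T}\Pr(\cA_t(v))z^t$ has a unique simple pole in $|z|\leq 1+\l$ at $z^*=1+p_v$, extract the residue to obtain the main term $(1+\xi_{v,2})/(1+p_v)^t$, and bound the remainder by Cauchy's estimate on the circle $|z|=1+\l$.

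\textbf{Mixing estimates on the disk.} First I would sharpen \eqref{4a} into a uniform bound. Iteration gives $|h_t-\p_v|,|r_t-\p_v|\leq \p_v\cdot 2^{k-1}\om^{-k}$ for $t\in[kT,(k+1)T)$; summing these against $|z|^t$ and using $(1+\l)^T\leq e^{1/K}=O(1)$ produces, uniformly on $|z|\leq 1+\l$,
\[
H(z)=\p_v\frac{z^T}{1-z}+\tilde H(z),\qquad R(z)=R_T(z)+\p_v\frac{z^T}{1-z}+\tilde R(z),
\]
with $|\tilde H(z)|,|\tilde R(z)|=O(\p_vT/\om)$. Combining via \eqref{gfw} and clearing $(1-z)$ gives $F(z)=[\p_vz^T+(1-z)\tilde H(z)]/D(z)$, where $D(z)=(1-z)R_T(z)+\p_vz^T+(1-z)\tilde R(z)$.

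\textbf{Locating the pole and reading off the coefficient.} Since $D(1)=\p_v$ and $D'(1)=-R_v+T\p_v-\tilde R(1)=-R_v(1+O(\om^{-1}))$ (using (b) and $R_v\geq r_0=1$), a Newton step locates a unique real root $z^*=1+p_v$ near $1$ with $p_v=\p_v/[R_v(1+\xi_{v,1})]$ and $|\xi_{v,1}|=O(\om^{-1})$, which is the first assertion. To extract coefficients I would rewrite $A(z)=(z^T-F(z))/(1-z)=N(z)/D(z)$; the $(1-z)$ cancels in the numerator and $N$ is analytic and polynomially bounded on the disk. Partial fractions give
\[
A(z)=\frac{-N(z^*)/D'(z^*)}{z-z^*}+\tilde A(z).
\]
A routine computation using the mixing estimates plus $T\p_v=O(\om^{-1})$ yields $-N(z^*)/(z^*D'(z^*))=1+O(\om^{-1})$, so the residue contributes exactly $(1+\xi_{v,2})/(1+p_v)^t$ with $|\xi_{v,2}|=O(\om^{-1})$. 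The remainder $[z^t]\tilde A$ is controlled via Cauchy on $|z|=1+\l$ by $\max_{|z|=1+\l}|\tilde A|\cdot(1+\l)^{-t}$; since $(1+\l)^{-t}\leq e^{-\l t/2}$ and $|\tilde A|$ is at worst polynomial in $T$ on the disk, this is $o(Te^{-\l t/2})$, matching \eqref{frat}.

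\textbf{Main obstacle.} The delicate step is the Rouché argument needed to guarantee that $z^*$ is the \emph{only} zero of $D$ in the full disk $|z|\leq 1+\l$; a nearby spurious zero would destroy the single-exponential form. Assumption (a) is exactly what rules this out: the lower bound $|R_T(z)|\geq c$ on the disk lets me treat $(1-z)R_T(z)$ as the dominant piece of $D$ and bound the perturbation $\p_v z^T+(1-z)\tilde R(z)$ against it everywhere except in a small neighborhood of $z^*$. The key scale comparison $p_v=O(\p_v/R_v)\ll\l=1/(KT)$ (by (b) and $R_v\geq 1$) ensures the pole sits well inside the disk with ample separation from its boundary, so that Rouché applies on a circle between $z^*$ and $|z|=1+\l$ and localizes the unique root of $D$ to a tight neighborhood of $1$.
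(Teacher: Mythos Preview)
Your overall strategy—rewrite $F(z)=H(z)/R(z)$ as a ratio with denominator $D(z)=(1-z)R_T(z)+\pi_vz^T+(1-z)\tilde R(z)$, locate the unique simple zero $z^*=1+p_v$ of $D$ in $|z|\le 1+\l$ via Rouch\'e (using (a) to make $(1-z)R_T(z)$ dominant), subtract the pole, and bound the remainder by Cauchy on $|z|=1+\l$—is exactly the paper's approach, and your identification $p_v=\pi_v/(R_v(1+O(\om^{-1})))$ is correct.

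There is, however, a genuine gap in the error estimate. You pass directly to the avoidance generating function $\sum_{t\ge T}\Pr(\cA_t(v))z^t=(z^T-F(z))/(1-z)=N(z)/D(z)$ and claim that since ``$|\tilde A|$ is at worst polynomial in $T$ on the disk, this is $o(Te^{-\l t/2})$''. Polynomial in $T$ does not give $o(T)$, and in fact $\max_{|z|=1+\l}|\tilde A|=\Theta(T)$: at the real point $z=1+\l$ the term $z^T/(1-z)$ contributes $-(1+\l)^T/\l\approx -e^{1/K}KT$ and the subtracted pole (residue $\approx -1$) contributes $\approx KT$, leaving $\approx -(e^{1/K}-1)KT=\Theta(T)$. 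So Cauchy on your $\tilde A$ yields only $O(Te^{-\l t/2})$, not the $o(Te^{-\l t/2})$ of \eqref{frat}.

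The paper avoids this by extracting coefficients from $F$ itself, \emph{before} dividing by $1-z$. On $|z|=1+\l$ the numerator $\pi_vz^T+(1-z)\tilde H(z)$ is $O(\pi_v)$ while $|D(z)|\ge \l c/2$, so $|F(z)|=O(\pi_v/\l)=O(T\pi_v)=O(\om^{-1})$; the pole part of $F$, with residue $O(p_v)$, is likewise $O(p_v/\l)=O(\om^{-1})$ there. Hence the analytic remainder of $F$ is $O(\om^{-1})$ on the circle, Cauchy gives $|f_t-\text{main}|\le O(\om^{-1})e^{-\l t/2}$, and only \emph{then} does one sum $\sum_{\t>t}$ to reach $\Pr(\cA_t(v))$, at the cost of a factor $O(1/\l)=O(T)$. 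The crucial $\om^{-1}$ saving comes from the smallness of $F$ on the boundary circle, and is destroyed once you divide by $1-z$. (Minor aside: Rouch\'e is run directly on $|z|=1+\l$, where $|(1-z)R_T(z)|\ge\l c$ dominates the $O(\pi_v)+O(\l\om^{-2})$ perturbation; no intermediate circle or exceptional neighborhood of $z^*$ is needed.)
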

\begin{proof}
The proof is very similar to that given in previous papers. We will defer its proof to an appendix.
\end{proof}
\begin{Remark}\label{rem1a}
We will not have to verify \eqref{RTz} to use the theorem. It was shown in \cite{Hyper} that \eqref{RTz} follows from $R_v=O(1)$ and in our applications, $R_v=1+o(1)$.
\end{Remark}

\section{Proof of Theorem \ref{th1}}\label{easy}
Because our results require $n\geq n_\e$ we can state inequalities in asymptotic terms. I.e. if we want to show that some parameters $A_n,B_n$ satisfy
$A_n\leq (1+\e)B_n$ then we can write $A_n\leq (1+o(1))B_n$. Then if $n$ is large enough, so that the $o(1)$ term is at most $\e$, then we are dealing with a bounded size problem, which can in principle, be dealt with by an exponential time algorithm.

We continue by computing parameters for  use in Lemma \ref{MainLemma}. We begin with the mixing time $T=T_{mix}$. We use the following Cheeger inequality, see for example Levin, Peres and Wilmer \cite{LPW}, (13.6).
\beq{mix}{
\max_{u,x\in V}\card{\frac{P_{u}^{(t)}(x)-\pi_x}{\pi_x}}\leq \frac{e^{-\F^2t/8}}{\min_u\p_u}\leq \theta^{-1}ne^{-\F^2t/8},
}
where the last inequality follows from \eqref{steady}. (We have $e^{-\F^2t/8}$ instead of $e^{-\F^2t/2}$ because our definition of $\F$ is larger than that defined in (7.8) of \cite{LPW}, but larger by a factor of at most 2.)

We can satisfy \eqref{mix0} if we take
\beq{defT}{
T=\frac{8\log(\om n/\theta)}{\F^2}=\frac{8\log(n^{1+3\theta\psi}/\theta)}{\F^2}\leq \frac{10\theta\log n}{n^{-2\theta\psi}}\leq n^{3\theta \psi}.
}
With this value of $T$ we find that
\beq{withT}{
T\p_v\leq \frac{T}{\theta n}\leq \frac{n^{3\theta\psi}}{\theta n}\leq \om^{-1}=n^{-3\theta\psi}.
}
We therefore find that \eqref{Tpi} is satisfied.
\begin{lemma}\label{tstar}
Let $t^*$ be as in Theorem \ref{th1}. Then,
\begin{enumerate}[(a)]
\item
\beq{1}{
F(t^*)=o(t^*).
}
\item
\beq{bounds}{
n\log n\leq t^*\leq \theta^{-1}n\log n.
}
\end{enumerate}
\end{lemma}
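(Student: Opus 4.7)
The plan is to prove both parts by straightforward estimates using the defining relation $-F'(t^*) = \sum_v e^{-\pi_v t^*} = 1$ together with the two-sided bound $\theta/n \leq \pi_v \leq 1/(\theta n)$ from \eqref{steady}. The key monotonicity, already noted in the paper, is that $F'$ is increasing, so bounds on $t^*$ can be obtained by locating values of $t$ at which $-F'(t)$ crosses $1$.

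I would prove part (b) first, since the bounds $n\log n \le t^* \le \theta^{-1}n\log n$ feed directly into part (a). For the upper bound, use $\pi_v \ge \theta/n$ to get $e^{-\pi_v t} \le e^{-\theta t/n}$, so $-F'(t) \le n e^{-\theta t/n}$. Setting $t = \theta^{-1}n\log n$ makes the right side equal $1$, hence $-F'(t) \le 1$, forcing $t^* \le t$ by monotonicity of $F'$. For the lower bound, I would apply Jensen's inequality to the convex function $x \mapsto e^{-x}$ with the values $\pi_v t$ and uniform weights $1/n$: since $\sum_v \pi_v = 1$,
\begin{equation*}
-F'(t) \;=\; \sum_v e^{-\pi_v t} \;\ge\; n\exp\!\left(-\tfrac{1}{n}\sum_v \pi_v t\right) \;=\; n e^{-t/n}.
\end{equation*}
At $t = n\log n$ this lower bound is $1$, giving $-F'(n\log n) \ge 1$, and again monotonicity of $F'$ forces $t^* \ge n\log n$.

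Part (a) is then essentially a one-line calculation. Using $1/\pi_v \le n/\theta$ uniformly in $v$, pull the maximum out of the sum defining $F$:
\begin{equation*}
F(t^*) \;=\; \sum_v \frac{e^{-\pi_v t^*}}{\pi_v} \;\le\; \frac{n}{\theta}\sum_v e^{-\pi_v t^*} \;=\; \frac{n}{\theta},
\end{equation*}
since $\sum_v e^{-\pi_v t^*} = -F'(t^*) = 1$. Combined with $t^* \ge n\log n$ from part (b), this gives $F(t^*)/t^* \le 1/(\theta\log n) = o(1)$, as required.

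There is essentially no serious obstacle here; the only thing to be a little careful about is the direction of the Jensen inequality used for the lower bound on $t^*$, and making sure to invoke the monotonicity of $F'$ correctly when translating bounds on $-F'(t)$ into bounds on $t^*$. Both parts rely only on the elementary bounds on $\pi_v$ that follow from $G \in \cG(n,\theta)$ and from $\sum_v \pi_v = 1$.
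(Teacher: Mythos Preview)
Your proposal is correct and follows essentially the same approach as the paper: Jensen's inequality for the lower bound on $t^*$, the bound $\pi_v\ge\theta/n$ for the upper bound, and then the combination $1/\pi_v\le n/\theta$, $\sum_v e^{-\pi_v t^*}=1$, and $t^*\ge n\log n$ to get $F(t^*)/t^*\le 1/(\theta\log n)$. The only cosmetic difference is that the paper plugs $t^*$ directly into the defining equation to read off the bounds, whereas you evaluate $-F'$ at the endpoints and invoke monotonicity; the content is identical.
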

\begin{proof}
Now, by convexity,
\[
\frac{1}{n}=\sum_{v\in V} \frac1n e^{-t^*\pi_v} \ge e^{-\frac{t^*}{n} \sum \pi_v} =e^{-t^*/n}.
\]
and this implies  the lower bound in \eqref{bounds}.

Next observe that \eqref{steady} implies
$$1=\sum_{v\in V} e^{-t^*\p_v}\leq ne^{-{ \theta t^*/n}}$$
and this implies the upper bound in \eqref{bounds}. Also we have that
$$\frac{F(t^*)}{t^*}\leq \sum_{v\in V}\frac{e^{-t^*\p_v}}{\theta\log n}=\frac{1}{\theta\log n}=o(1),$$
as claimed in \eqref{1}.
\end{proof}
\subsection{Upper bound on $C_G$}\label{upp}
We consider the walk $\cW_u$ and write $T=T_{mix}(u)$ for the mixing time. We observe first that
\beq{Rv}{
1\leq R_v\leq 1+\frac{T}{\theta n}=1+o(1).
}
The inequality follows from the fact that if the walk $\cW_v$ is not at $v$ then the probability it moves to $v$ at the next step is at most $1/\theta n$. The final claim can be seen from \eqref{withT}. Here we have ignored the self loops added to each vertex to make the chain lazy. As already mentioned, the addition of these loops multiplies the covertime by $(2+o(1))$ and so to get the covertime we would multiply and then divide by this factor. Here we just acknowledge that it multiplies the mixing time by a factor $(2+o(1))$, which can also be ignored in the above equation.

Let $T_{cov}(u)$ be the time taken to visit every vertex of $G$ by the random walk $\cW_u$. Let $U_t$ be the number of vertices of $G$ which have not been visited by $\cW_u$ at step $t$. We note the following:
\begin{align}
\Pr(T_{cov}(u) > t)&=\Pr(U_t\geq 1)\le \min\{1,\E(U_t)\}\label{TG},\\
C_u=\E(T_{cov}(u))&=\sum_{t > 0} \Pr(T_{cov}(u) \geq t) \label{ETG}
\end{align}
It follows from \eqref{TG}, \eqref{ETG} that for all $t$
\begin{equation}\label{shed}
C_u \le t+1+ \sum_{s> t} \E(U_s)\leq t+1+\sum_{v\in V}\sum_{s> t}
\Pr(\cA_s(v)).
\end{equation}
Putting $t=t^*$, defined in \eqref{ft1},  we see from \eqref{frat} that
\begin{align}
C_u& \le t^*+1+ \sum_{v\in V}\sum_{s> t^*}\brac{\frac{(1+\xi_{v,2})}{(1+p_v)^s}+o(Te^{-\l s/2})}\nonumber\\
   &=t^*+1+ \sum_{v\in V}\brac{\frac{(1+\xi_{v,2})}{p_v(1+p_v)^{t^*+1}}+o(T^2e^{-\l t^*/2})}\nonumber\\
  &=t^*+1+ \sum_{v\in V}\brac{\frac{(1+\xi_{v,2})\exp\set{-(t^*+1)\log(1+p_v)}}{p_v}+o(T^2e^{-\l t^*/2})}\nonumber\\
&=t^*+1+(1+O(\om^{-1}))\sum_{v\in V}\brac{\frac{e^{-p_vt^*+O(p_v^2t^*)}}{p_v}+o(T^2e^{-\l t^*/2})}.\label{upper}
\end{align}
\begin{Remark}\label{observe}
Observe that the term $o(T^2e^{-\l t^*/2})$ is negligible, since $t^*=\Theta(n\log n)$ and $\l=\Omega(n^{-3\e\theta})$. It is in fact at most $e^{-n^{1-3\e\theta}}$ and we will assume always that $\e$ is sufficiently small.
\end{Remark}
Now, because $t^*=\Theta(n\log n)$, we have, using \eqref{withT} and \eqref{steady},
$$p_vt^*=\brac{1+O\bfrac{T}{\theta n}+O\bfrac{1}{n}}\p_vt^*=\p_vt^*+O(n^{-1+3\theta\psi}\log n)\text{ and }p_v^2t^*=O\bfrac{\log n}{n}.$$
And so we can replace \eqref{upper} by
\beq{upperx}{
C_G\leq t^*+1+(1+O(\om^{-1}))F(t^*)+O(e^{-n^{1-3\e\theta}})\leq \brac{1+\frac{2}{\theta\log n}}t^*\leq (1+\e)t^*,
}
after using \eqref{1}.
\begin{Remark}\label{rem1}
Using Remark \ref{observe} we obtain a simpler upper bound:
$$\Pr(T_{cov}(u)\geq t)\leq \sum_{v\in V}\Pr(\cA_t(v))\leq (1+o(1))\sum_{v\in V}\frac{e^{-(1+o(1))t\p_v}}{\p_v}.$$

Putting $t=Kt^*$, we see that for any constant $L>0$ there exists $K=K(L)$ such that
$$\Pr(T_{cov}(u)\geq Kt^*)\leq n^{-L}.$$
\end{Remark}
\subsection{Lower bound}
Now let $T,u$ be as in Section \ref{upp}.
\beq{t1}{
t_1=t^*(1-\e_1)\text{ where }\e_1=\frac{1}{\log^{1/2}n}.
}
Then let $U_1$ denote the set of vertices that have not been visited by $\cW_u$ by time $t_1$, and let  $\cT=\set{\cW_u(i):1\leq i\leq T}$. Then we have that
\beq{-T}{
\E(|U_1|)=\sum_{v\in V}\Pr({(v\notin\cT)}\wedge \cA_{t_1}(v))\geq -T+\sum_{v\in V}\Pr(\cA_{t_1}(v)).
}
Here we subtract $T$ to account for visits before the mixing time $T$.

Applying Lemma \ref{MainLemma} we see that
\begin{align*}
\E(|U_1|)&\geq -T+\sum_{v\in V}\brac{\frac{1+\xi_{v,2}}{(1+p_v)^{t_1}}+o(Te^{-\l t_1/2})}\\
&=-T+(1-o(1))\sum_{v\in V}e^{-\p_v(1-\e_1)t^*}\\
&\geq -T+(1-o(1))e^{\e_1\theta t^*/n}\sum_{v\in V}e^{-\p_vt^*}\\
&\geq -T+(1-o(1))n^{\e_1\theta}\\
&\approx n^{\e_1\theta}\to\infty,
\end{align*}
after using \eqref{bounds} to  lower bound $e^{\e_1\theta t^*}$ and \eqref{defT} to bound $T$, (here $\psi=o(\e_1)$).

We summarise this as
\beq{sumA}{
\E(|U_1|)\approx \sum_{v\in V}\Pr(\cA_{t_1}(v))\approx n^{\e_1\theta}.
}
We now use the second moment method to show that $|U_1|>0$ w.h.p. Fix two vertices $v,w$ distinct from the start $u$ of the walk. Let $\G=\G_{v,w}$ be obtained from $G$ by identifying $v,w$ as a single vertex $\g=\g_{v,w}$ and keeping the loop if $\set{v,w}\in E(G)$.

There is a natural measure preserving map from the set of walks in $G$ which start at $u$ and do not visit $v$ or $w$, to the corresponding set of walks in $\G$ which do not visit $\g$. Thus the probability that $\cW_u$ does not visit $v$ or $w$ in the first $t$ steps is equal to the probability that a random walk $\wh{\cW}_u$ in $\G$ which also starts at $u$ does not visit $\g$ in the first $t$ steps.

We first check that Lemma \ref{MainLemma} can be applied to $\wh{\cW}_u$. We observe that it is valid to use $T$ as a mixing time. This follows from Corollary 3.27 of \cite{AF} viz. that the relaxation time of a collapsed chain is bounded from above by that of the uncollapsed chain. Our estimate for $R_\g$ should now be $1+O(2T/(\theta n))$ (the 2 coming from vertices that are neighbors of $v$ and $w$ in $G$). Now
$$
\frac{\p_\g}{R_\g}=\frac{\p_v+\p_w}{1+O(T/(\theta n))}=\brac{1+O\bfrac{T}{\theta n}}\brac{\frac{\p_v}{R_v}+\frac{\p_w}{R_w}}.$$
And so since $t_1=\Theta(n\log n)$ we have
\multstar{
  \Pr(v,w\in U_1)=\Pr((v,w\notin \cT)\wedge \cA_{t_1}(v)\wedge \cA_{t_1}(w))=\Pr((\g\notin\cT)\wedge \cA_{t_1}(\g))\leq \Pr(\cA_{t_1}(\g))\\
=(1+O(\om^{-1})) \exp\set{-\frac{\p_\g}{R_\g}t_1}=(1+O(\om^{-1})) \exp\set{-\brac{\frac{\p_v}{R_v}+\frac{p_w}{R_w}}t_1}\\
=(1+O(\om^{-1})) \Pr(\cA_{t_1}(v))\Pr(\cA_{t-1}(w)).
}
It follows therefore that after using \eqref{sumA},
$$\E(|U_1|^2)\leq \E(|U_1|)+(1+O(\om^{-1}))\E(|U_1|)^2.$$
So, by the Chebyshev inequality,
\beq{lbU}{
\Pr(|U_1|=0)\leq \frac{\E(|U_1|^2)-\E(|U_1|)^2}{\E(|U_1|)^2}\leq \frac{1}{\E(|U_1|)}+\frac{1}{\om}\leq \frac{2}{n^{\e_1\theta}}.
}
This implies that $C_G\geq  (1-o(1))t_1$ and completes the proof of Theorem \ref{th1}.

We will need the following lemma in Section \ref{Tlarge}. Let
$$\e_2=\frac{1}{\log^{1/4}n}.$$
\begin{lemma}\label{rem2}
$$\Pr\brac{\card{T_{cov}(u)-t^*}\geq \e_2t^*}\leq 3\e_2.$$
\end{lemma}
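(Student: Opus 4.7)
The plan is to bound the upper and lower tails of $T_{cov}(u)$ separately, in each case by reusing machinery already developed in Sections \ref{upp} and 3.2.

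For the upper tail I start from \eqref{TG}, which gives
\[
\Pr(T_{cov}(u)\geq (1+\e_2)t^*)\leq \E(U_{(1+\e_2)t^*})\leq \sum_{v\in V}\Pr(\cA_{(1+\e_2)t^*}(v)).
\]
Lemma \ref{MainLemma}, together with the estimates $p_v=(1+o(1))\p_v$ and $p_v^2 t^*=O(\log n/n)$ established in Section \ref{upp}, gives $\Pr(\cA_{(1+\e_2)t^*}(v))=(1+o(1))e^{-\p_v(1+\e_2)t^*}$ plus a negligible $o(Te^{-\l t^*/2})$ contribution. The key manipulation is
\[
\sum_{v\in V}e^{-\p_v(1+\e_2)t^*}=\sum_{v\in V}e^{-\p_v t^*}\cdot e^{-\e_2\p_v t^*}\leq e^{-\e_2\theta\log n}\sum_{v\in V}e^{-\p_v t^*}=n^{-\e_2\theta},
\]
using $\p_v t^*\geq \theta\log n$ (from \eqref{steady} and \eqref{bounds}) and $\sum_v e^{-\p_v t^*}=-F'(t^*)=1$. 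Since $n^{-\e_2\theta}=e^{-\theta\log^{3/4}n}$ is vastly smaller than $\e_2=1/\log^{1/4}n$, the upper tail contributes $o(\e_2)$.

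For the lower tail $\Pr(T_{cov}(u)\leq (1-\e_2)t^*)$ I replay the second moment argument of Section 3.2 with $\e_2$ substituted for $\e_1$. Let $U_2$ denote the set of vertices not visited by time $(1-\e_2)t^*$. The first moment calculation yields $\E(|U_2|)\geq (1-o(1))n^{\e_2\theta}$, once one checks that the correction term $T=n^{3\theta\psi}$ (accounting for visits before the mixing time) is negligible compared with $n^{\e_2\theta}$; this uses $\psi=1/\log^{2/3}n\ll \e_2$. The vertex-identification argument then produces $\E(|U_2|^2)\leq (1+O(\om^{-1}))\E(|U_2|)^2+\E(|U_2|)$, so by Chebyshev $\Pr(|U_2|=0)\leq 2/n^{\e_2\theta}=o(\e_2)$.

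Adding the two tail bounds yields a total deviation probability of $o(\e_2)$, comfortably below $3\e_2$. The argument introduces no new ideas; the only thing to verify is that every $o(1)$ error term from Sections \ref{upp} and 3.2 (namely terms of size $O(\om^{-1})$ or $O(1/\log n)$) is in fact $o(\e_2)$, which is immediate given the explicit rates, so the only obstacle is this modest bookkeeping.
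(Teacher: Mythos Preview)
Your argument is correct. For the lower tail you essentially rerun the paper's second-moment argument at threshold $(1-\e_2)t^*$ instead of $(1-\e_1)t^*$; the paper simply quotes \eqref{lbU} directly, which suffices since $\e_2>\e_1$ implies $\{T_{cov}(u)\le (1-\e_2)t^*\}\subseteq\{T_{cov}(u)\le t_1\}$.

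For the upper tail, however, your route is genuinely different from the paper's and in fact sharper. The paper does not bound $\Pr(T_{cov}(u)\ge(1+\a)t^*)$ directly via a union bound; instead it argues indirectly from the expectation bound \eqref{upperx}: it decomposes $\E(T_{cov}(u))$ according to whether $T_{cov}(u)$ falls below $t_1$, between $t_1$ and $(1+\a)t^*$, or above $(1+\a)t^*$, uses \eqref{lbU} and Remark~\ref{rem1} to control the outer pieces, and solves the resulting inequality for $P_\a$. This Markov-type manoeuvre yields only $\a P_\a\le 3/\log^{1/2}n$, whence $P_{\e_2}\le 3\e_2$, which is exactly the constant in the lemma. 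Your direct first-moment bound via \eqref{TG} gives the much stronger estimate $P_{\e_2}\le (1+o(1))n^{-\e_2\theta}=o(\e_2)$, so your combined tail bound is $o(\e_2)$ rather than $3\e_2$. The paper's approach has the minor advantage of not re-invoking Lemma~\ref{MainLemma}, but yours is both simpler and quantitatively better.
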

\begin{proof}
The probabilistic lower bound for $T_{cov}(u)$ follows from \eqref{lbU}. For the upper bound, for a given $\a>0$, we let $P_\a=\Pr(T_{cov}(u)\geq (1+\a)t^*)$ and then we have for some large constant $K>0$ that from \eqref{upperx},
\mult{PP}{
\brac{1+\frac{2}{\theta\log n}}t^*\geq \E(T_{cov}(u))\geq\\
\E(T_{cov}(u)\mid T_{cov}(u)\leq t_1)\Pr(T_{cov}(u)\leq t_1)+\\
\E(T_{cov}(u)\mid t_1<T_{cov}(u)\leq (1+\a)t^*)\Pr(t_1<T_{cov}(u)
\leq (1+\a)t^*)\\
+\E(T_{cov}(u)\mid (1+\a)t^*<T_{cov}(u)\leq Kt^*)\Pr((1+\a)t^*<T_{cov}(u)\leq Kt^*) \\
\geq  0+t_1\brac{1-\frac{2}{n^{\e_1\theta}}-P_\a}+(1+\a)t^*(P_\a-O(n^{-L})).
}
Here $K$ and $L$ are related as in Remark \ref{rem1}. We obtain
$$\Pr(t_1<T_{cov}(u)\leq (1+\a)t^*)\geq \brac{1-\frac{2}{n^{\e_1\theta}}-P_\a}$$
from \eqref{lbU}.

It follows from \eqref{PP}, after division by $t^*$, that
$$1+\frac{2}{\theta\log n}\geq 0+\brac{1-\frac{2}{n^{\e_1\theta}}-\frac{2}{\log^{1/2}n}-P_\a}+(1+\a)P_\a.$$
We deduce from this that $\a P_\a\leq \frac{3}{\log^{1/2}n}$ and then that $P_\a\leq \frac{3}{\log^{1/4}n}$ for $\a=\frac{1}{\log^{1/4}n}$.
\end{proof}

\section{Partitioning the graph}\label{Pgraph}

{\bf Notation:} For  sets $S\subseteq X\subseteq V$, let $\deg_X(v)$ denote the number of neighbors of $v$ in $X$, and let $\deg_X(S)=\sum_{v\in S}\deg_X(v)$. We will reserve the un-subscripted $\deg$ for $\deg_V$. For given $S\subseteq X\subseteq V$, we also use $X$ as the subgraph $G[X]$ of $G$ induced by $X$ in the notation $\F_X,\F_X(S)$.

We assume that the minimum degree $\delta(G)\geq \theta n$ for some constant $\theta>0$ and that $\psi=1/\log^{2/3}n$ as in \eqref{e0}. Suppose that $\z=n^{-\theta\psi}$.

We partition $V$ as follows: our initial partition $\Pi_0$ consists of $V$ alone. Suppose that we have created a partition $\Pi$, and $X \in \Pi$. We can use the algorithm of  Leighton and Rao, \cite{LR}, to find a cut $(S:\ol S)$ of $X$ such that $\F_X \le \F_X(S) \le c_{LR} \F_X\log n$, where $c_{LR}>0$ constant. If $\F_X(S) \ge \z $,  we do not partition $X$ any further. Otherwise, if $\F_X(S) < \z$, we refine $\Pi$ by  splitting $X$ into $X_1=S$ and $X_2=X\setminus S$. For $\ell=1,2$ let
\beq{part1}{
Y_\ell=Y(X_\ell)=\set{v\in X_\ell:\deg_{X_\ell}(v)\leq \deg_{X_{3-\ell}}(v)}.
}
We replace $X$ in the partition $\Pi$ by the pair 
\beq{part2}{
Z_\ell=(X_\ell\cup Y_{3-\ell})\setminus Y_\ell\text{ for }\ell=1,2.
}
Suppose that, for all $v \in X$, $\deg_X(v)  \ge \b n$, where (see \eqref{degdepth}) $\b=\min_d\b_d$ satisfies $\b > 4 \z^{1/2}$, for $d=O(1)$. If $v \in Y_\ell, \deg_{X\setminus X_\ell}(v) \ge \b n/2$, and thus
\beq{Yival}{
|Y_\ell|\leq \frac{e_X(X_\ell,X\setminus X_\ell)}{\b n/2}\leq\frac{2\z n}{\b}\leq \frac{\z^{1/2} n}{2}.
}
For the second inequality we  used the crude bound, $e_X(X_\ell,X\setminus X_\ell)\leq \z n^2$, which follows from $\F_X(S) < \z$ and \eqref{CondS}.

Continue in this way until the output of the algorithm of \cite{LR} returns a cut $(S, V_i\setminus S)$ such that $\F_{V_i}(S) \ge \z$ for all sets of the partition $\Pi=(V_1,V_2,\ldots,V_{\num})$.
The depth $d_\Pi(V_i)$ of $V_i$ in $\Pi$ is defined as follows: $d_{\Pi_0}(V)=0$ and if $X\in \Pi$ has depth $d$, then its {\em descendants} $Z_1,Z_2$ will both have depth $d+1$.
Suppose that $V_i$ has depth $d$. We claim that $d=O(1)$ and that
\beq{degdepth}{
\min\set{\deg_{V_i}(v):v\in V_i}\geq \b_d n,  \quad \text{where} \quad \b_d=\frac{\theta}{3^d}.
}
If so it follows from \eqref{degdepth}, and $d=O(1)$, that at depth $d$, $|V_i| \ge \b_{d} n=\Om(n)$.

We prove  \eqref{degdepth}  by induction. It is true for $d=0$ and $\b_0=\theta$. If $V_i$ has depth $d+1$ and arises from splitting $X$ at depth $d$ then for $v\in V_i$, provided $d=O(1)$, then from \eqref{Yival}
\beq{degi}{
\deg_{V_i}(v)\geq \frac{\b_d n}{2}-|Y(X_i)|\geq \brac{\frac{\b_d}{2}-\frac{2\z}{\b_d}}n \ge \b_{d+1} n.
}
We also have
\beq{mostdegrees}{
|\set{v\in V_i:\deg_{V_i}(v)\leq \deg(v)-d\z^{1/2}n}|\leq 3 d\z^{1/2}n.
}
This follows by induction. It is true for $d=0$. If $V_i$ has depth $d+1$ and arises from splitting $X$ at depth $d$ then
\multstar{
|\set{v\in V_i:\deg_{V_i}(v)\leq \deg(v)-(d+1)\z^{1/2}n}|\leq\\
 |\set{x\in X:\deg(x)\leq \deg(v)-d\z^{1/2}n}|+2\z^{1/2} n+ \z^{1/2}n.
}
The first term on the RHS is the number of vertices which have low degree at the previous level. The next term counts the at most $2\z^{1/2}n$ vertices which lose at least $\z^{1/2}n/2$ edges, as the cut $(X_1, X_2)$ of $X$ which gave rise to $(V_1,V_2)$ has at most $\z n^2$ edges. The last term comes from \eqref{Yival} and  compensates for the  neighbours of $Y(X_{\ell})$ with at most $\z^{1/2}n/2$ edges in the cut, who lost degree (at most) $|Y(X_{\ell})| \le \z^{1/2}n/2$,  when $Y(X_{\ell})$ was moved out of  $X_\ell$ to obtain $V_i$.

It follows from \eqref{degdepth} and \eqref{mostdegrees} that if $d=O(1)$ and  $X=V_1 \cup V_2$ then,
\beq{sizeS}{
|V_1|, |V_2| \ge \theta n/2.
}
To see this, suppose that $X$ was initially partitioned into $X_1=S,X_2=X\setminus S$ where $\F_X(S)<\z$ and that $V_i=(X_\ell\cup Y_{3-\ell})\setminus Y_\ell$, as in \eqref{part1}, \eqref{part2} above, with $V_i$ replacing $Z_i,i=1,2$.  As $Y_1=X_1\setminus V_1$ and $Y_2=V_1 \setminus X_1$, \eqref{Yival} implies that $|(V_1\setminus X_1)\cup (X_1\setminus V_1)|=o(n)$. This and \eqref{degdepth} implies that $|X_1|=\Omega(n)$. Suppose  that $|V_1|<\theta n/2$. Then \eqref{mostdegrees} implies that there are $|X_1|(1-o(1))$ vertices of $X_1$ of degree at least $\theta n(1-o(1))$ in $X$. So there are at least $\theta|X_1|n(1-o(1))/2$ edges in the cut $X_1:X_2$. And hence,
$$\z>\F_X(X_1)\geq \frac{(\theta|X_1|n(1-o(1))/2)\times d(V)}{|X_1|n\times d(V)}\geq \frac{\theta}3,$$
which is a contradiction.

By \eqref{sizeS} we have that sets at depth $d$ have size at least $\theta n/2$. On the other hand, at least $\theta n/2$ vertices are moved at each partition step, and so sets at depth $d$ have size at  most $n-d\theta n/2$.  This means that $n-d \theta n/2 \ge \theta n/2$, and partitioning must stop when $d< 2/\theta$.

\section{Computing the cover time}
Let $V_1,V_2,\ldots,V_{\num}$ be as in Section \ref{Pgraph}. For each $i$ we add weighted edges to create a multi-graph $H_i$ such that a random walk on $H_i$ corresponds to the visits to $V_i$ of a random walk on $G$. Thus, for each $i$, we define $H_i$ by adding extra edges to $E(V_i)$. If $v,w\in V_i$ then we add an {\em oriented} edge $(v,w)$ and give it a weight $\r_{v,w}$. Here $\r_{v,w}$ is the probability that a walk started at $v$ leaves $V_i$ immediately and returns to $V_i$ at $w$ and we have $\sum_{w\in V_i}\r_{v,w}=\frac{\deg(v)-\deg_{V_i}(v)}{\deg(v)}$. The (unoriented) edges of $G$ contained in $V_i$ will be given weight one. We will use $\w()$ to denote weight in $H_i$.

\begin{Remark}
If we take the random walk $\cW_u=(u=X(0),X(1),...,X(t),...)$ and delete the entries $X(t)$ that are not in $V_i$ then the remaining sequence is a random walk $\cZ_i$ on $H_i$.
\end{Remark}
A random walk $\cZ_i$ on $H_i$ will have steady state $\p_{v,i}=\deg(v)/\deg(V_i)$, $v\in V_i$ and will satisfy the conditions of Theorem \ref{th1}. Indeed, the walk is reversible. Checking detailed balance, we have
\multstar{
\p_{v,i}P_i(v,w)=\frac{\deg(v)}{\deg(V_i)}\brac{\frac{\deg_{V_i}(v)}{\deg(v)}\cdot \frac{
1_{\set{v,w}\in E(H)}}{\deg_{V_i}(v)}+\r_{v,w}}=\\
\frac{1_{\set{v,w}\in  E(H)}+\deg(v)\r_{v,w}}{\deg(V_i)}= \p_{w,i}P_i(w,v),
}
since necessarily, $\deg(v)\r_{v,w}=\deg(w)\r_{w,v}$. This follows from the fact that for any individual walk $W=(x_0=v,x_1,x_2,\ldots,x_k=w)$ from $v$ to $w$ on $G$ and its reversal $\ol W= (x_k=w,x_{k-1},\ldots, x_1,x_0=v)$ we have
$$\p_{v,i}\Pr(W)=\frac{\deg(v)}{\deg(V_i)}\prod_{j=0}^{k-1}\frac{1}{\deg(x_i)} =\frac{\deg(w)}{\deg(V_i)}\prod_{j=1}^{k}\frac{1}{\deg(x_i)}=
\p_{w,i}\Pr(\ol W).$$

To obtain $\deg(v)\r_{v,w}=\deg(w)\r_{w,v}$ we sum over all walks from $v$ to $w$ with interior vertices not in $V_i$.

Now consider the conductance of $H_i$.  In what follows we use the fact that the weight of edges incident with a vertex $v$ in $H_i$ is equal to the degree of $v$ in $G$. Suppose that $S\subseteq V_i$.
According to the definition of $\Phi_{H_i}(S)$ (or rather its extension to graphs with weighted edges),
\[
\Phi_{H_i}(S)=\frac{\w(S,\ol S)\, \w(V_i)}{\deg(S)\, \deg(\ol S)}\ge \frac{e(S, \ol S) \,\deg(V_i)}{ |S||\ol S| n^2} = \F_G(S) \frac{\deg(V_i)}{n^2} \ge \F_G(S) \frac{\theta^2}{3},
\]
assuming $|V_i|$ has at least $\theta n/2$ vertices of degree $\theta n (1-o(1))$.
Thus
\[
\Phi(H_i) \ge \F(G) \frac{\theta^2}{3}\ge \frac{ \F_G(S)}{c_{LR} \log n}\frac{\theta^2}{3} \ge \frac{\z\theta^2 }{3 c_{LR} \log n},
\]
where the last step comes from combining Theorem 2  of Leighton and Rao \cite{LR}, with equation (3) of that paper, that gives a deterministic polynomial algorithm to find a cut $(S:\ol S)$ such that $\F(G) \le \F_G(S) \le c_{LR} \F(G) \log n$.

\subsection{When $T=o(C)$:  Proof of Theorem \ref{th2}}\label{Tsmall}

We see from Theorem \ref{th1} that $\cZ_i$ will have to make a number of steps in some explicit range $(1\pm\e)C_i$ in order to cover $V_i$.
\begin{Remark}
The estimate $C_i$ does not depend significantly on the values $\r_{v,w}$.  We see from Theorem \ref{th1} that up to a factor $(1+o(1))$, the $C_i$ depend only on the degrees of $H_i$. But we can compute close approximations to the $\r_{v,w}$. For this we need to compute the values
\beq{sigma}{
\s_{x,y,i,t}=\Pr(\cW_x(t)=y,\, \cW_x(\t)\notin V_i,1\leq \t\leq t).
}
Given these values, we have
$$\r_{v,w}=\sum_{x,y\notin V_i}P(v,x)P(y,w)\sum_{t\geq 1}\s_{x,y,i,t}.$$
Finally, to compute the values in \eqref{sigma} we simply look at powers of the matrix $Q_i$ that is obtained from $P$ by replacing entries in columns associated with $V_i$ by zeroes.
\end{Remark}

Consider a walk $\cW$ starting in the steady state that walks for $t$ steps. The expected number of visits to $V_i$ is $t\p(V_i)$ and it will be concentrated around this, if the mixing time of $\cW$ is small. For example, Corollary 2.1 of Paulin \cite{Paulin} shows that if $Z_{i,t}$ is the number of visits to $V_i$ then
\beq{Mconc}{
\Pr(|Z_{i,t}-t\p(V_i)|\geq u)\leq \exp\set{-\frac{2u^2}{tT}}.
}
Next let
\beq{defC}{
C=\max\set{\frac{C_i}{\p(V_i)}:i\in [s]}=\Omega(n\log n).
}
Then $C_G=(1\pm\e)C$ if $T=o(C)$. Indeed, putting $t=C,u=\e C$ in \eqref{Mconc} we see immediately that w.h.p. $T_{cov}(u)$ is within a factor $1+o(1)$ of $C$. This immediately gives us a lower bound of $(1-o(1))C$ for the expectation. For the upper bound we use Remark \ref{rem1} in the following way: we know that $\Pr(T_{cov}(u)\in [(1+o(1))C,KC])=o(1)$ and so this range adds $o(C)$ to the expectation. After this, $[KC,\infty]$ adds a negligible amount for large $K$.

This completes our proof of Theorem \ref{th2}.

\subsection{When $T$ is large: Proof of Theorem \ref{th3}}\label{Tlarge}

While a nice formula for the cover time is not necessarily attainable, we claim that we can  deterministically compute quantities that give us a factor $2+o(1)$ estimate for the cover time, in a time polynomial in $n$.

Consider the  $n\times n$ matrix $Q$ where $Q(u,v)=P(u,v)\xi_{\f(u)}$ where the $\xi_i,i\in[s]$ are indeterminate and $\f$ is defined by $u\in V_{\f(u)}$, for $V_i \in \Pi$. Now consider the $t$--th power of $Q$. Then
\multstar{
Q^t(u,v)=\\
\sum_{\t_1+\cdots+\t_s=t}\;\Pr(\cW\text{ goes from $u$ to $v$ in $t$ steps and makes $\t_i$ $V_i$-moves, for $i\in [s]$})\prod_{i=1}^s\xi_i^{\t_i}.
}
Here a $V_i$-move is from a vertex in $V_i$ to any vertex $v \in V$. Note that the number of $V_i$ moves is equal to the number of moves by $\cZ_i$. The cover time of any connected $n$-vertex graph of minimum degree $\delta$ is  $O(n|E|/\delta)$, \cite{KLNS}. When $\delta=\theta n$, $C_G=O(n^2)$. Thus we compute $Q^t$ for $1\leq t\leq n^4$, and observe that this computation can be done in $O(n^7)$ time. Let
$$\k(u,\t,i)\text{ denote the number of steps in $\cW_u$ needed for $\t$ $V_i$-moves}.$$
Next let $C_i^{\pm}=\brac{1\pm\e_2}C_i$ be such that the cover time of  $\cZ_i$ is in $[C_i^-,C_i^+]$ w.h.p., see Lemma \ref{rem2}.

Note that the $C_i$  are given by \eqref{ft1} of Theorem  \ref{th1}, which can be computed in deterministic polynomial time.


Let $U_{i,t}$ denote the set of unvisited vertices of $V_i$ at time $t$. We know from the proof of Theorem \ref{th1} that w.h.p. if $t\leq \k(u,C_i^-,i)$ then $U_{i,t}\neq \emptyset$.
This implies that 
\beq{wt1}{
C_G\geq \max_{u\in V}\E(\max_{i\in [s]}\k(u,C_i^-,i)).
}
For the RHS of \eqref{eth3}, we note that at time $\max_{i\in[s]}\k(u,C_i^-,i)$ the walk $\cW_u$ will be at some vertex $v$ and then after a further $\max_{i\in[s]}\k(v,C_i^-,i)$ steps\footnote{We could write $\max_{i\in[s]}\k(v,C_i^+-C_i^-,i)$ here, but we cannot prove that this is significantly smaller than what we have written.} the walk $\cW_u$ will w.h.p. have spent at least time $2C_i^-$ in $V_i$ for every $i\in[s]$.

Because $2C_i^->C_i^+$, the walk $\cW_u$ will w.h.p. have covered $V$. Thus
\beq{wt2}{
C_G\leq (2+o(1))\max_{u\in V}\E(\max_{i\in[s]}\k(u,C_i^-,i)).
}
and this completes the proof of Theorem \ref{th3}.

\appendix
\section{Proof of Lemma \ref{MainLemma}}
Write
\begin{equation}
\label{uv}
R(z)= R_T(z)+\hR_T(z)+\frac{ \pi_v z^T}{1-z},
\end{equation}
where $R_T(z)$ is given by (\ref{Qs}) and
$$
\hR_T(z)=\sum_{t \ge T} (r_t-\pi_v)z^t
$$
generates the error in using the stationary distribution $\pi_v$ for $r_t$ when $t \ge T$. Similarly,
\begin{equation}
\label{hs}
H(z)= \hH_T(z)+\frac{\pi_vz^T}{1-z}.
\end{equation}
Equation \eqref{4a} implies that the radii of convergence of both $\hR_T$ and $\hH_T$ exceed $1+2\l$. Moreover, for $Z=H,R$ and $|z|\leq 1+\l$, we see from \eqref{4a} that
\begin{equation}\label{12}
|\wh{Z}_T(z)|\leq \frac{\p_v}{2}\sum_{t\geq T}\bfrac{2(1+\l)}{\om}^{\rdown{t/T}}\leq\frac{2(1+\l)T\p_v}{\om}=O(\om^{-2}).
\end{equation}
Using (\ref{uv}), (\ref{hs}) we rewrite $F(z)=H(z)/R(z)$ from  (\ref{gfw}) as  $F(z)=B(z)/A(z)$ where
\begin{eqnarray}
 A(z)&=&\pi_vz^T + (1-z)(R_T(z)+\hR_T(z)),\label{as}\\
B(z)&=&\pi_vz^T +   (1-z)\hH_T(z).\label{bs}
\end{eqnarray}
For real $z \ge 1$ and $Z=H,R$, we have
\[
Z_T(1) \le Z_T(z) \le Z_T(1) z^T.
\]
Let $z=1+\b\pi_v$, where $0\leq \b\leq 1$. Since $T\pi_v\leq \om^{-1}$ we have
\[
Z_T(z)=Z_T(1)(1+\xi_1)\ \text{ where $|\xi_1|\leq (1+\b\p_v)^{T}-1\leq \frac{2\b}{\om}$.}
\]
$T\p_v\leq\om^{-1}$ and $R_v\geq 1$ implies that
$$A(z)=\p_v(1-\b R_v(1+\xi_1))\text{ where $|\xi_1|=O(\om^{-1})$.}$$
It follows that $A(z)$ has a real zero at $z_0$, where
\begin{equation}\label{s0}
z_0= 1+\frac{\pi_v}{R_v(1+\xi_1)}=1+p_v.
\end{equation}
We also see that since $|z_0^T|\leq 1+2\om^{-1}$,
\begin{align*}
A'(z_0)&=T\p_vz_0^{T-1}-(R_T(z_0)+\hR_T(z_0))-p_v(R_T'(z_0)+\hR_T'(z_0))\\
&=O(\om^{-1})-\brac{R_v+O(\om^{-1})+o(\om^{-1})}-o(\p_v)\\
&=-R_v+O(\om^{-1})\\
&\neq 0.
\end{align*}
and thus $z_0$ is a simple zero (see e.g. \cite{BC} p193).
The  value of $B(z)$ at $z_0$ is
\begin{equation}
\label{B0}
 B(z_0)=\p_v\brac{1+O(\om^{-1})+o(\om^{-1})}=\pi_v \brac{1+O(\om^{-1})}\neq 0.
\end{equation}
Thus,
\begin{equation}
\label{BAds}
\frac{B(z_0)}{A'(z_0)}=-\brac{1+\xi_2}p_v\text{ where }|\xi_2|=O(\om^{-1}).
\end{equation}
Thus  (see e.g. \cite{BC} p195) the principal part of the Laurent expansion of $F(z)$ at $z_0$  is
\begin{equation}\label{ps}
f(z)= \frac{B(z_0)/A'(z_0)}{z-z_0}.
\end{equation}
To approximate the coefficients of the generating function $F(z)$, we now use  a standard technique  for the asymptotic expansion  of power series (see e.g.\cite{Wi} Theorem 5.2.1).

We prove below that $F(z)=f(z)+g(z)$, where $g(z)$ is analytic in $C_{\l}=\{|z|\leq 1+\l\}$ and that
$$M=\max_{z \in C_{\l}} |g(z)|=O(\om^{-1}).$$

Let $a_t=[z^t]g(z)$, then (see e.g.\cite{BC} p143), $a_t=g^{(t)}(0)/t!$. By the Cauchy Inequality (see e.g. \cite{BC} p130) we see that $|g^{(t)}(0)| \le Mt!/(1+\l)^t$ and thus
\[
|a_t| \le \frac{M}{(1+\l)^t}\leq Me^{-t\l/2}.
\]
As $[z^t]F(z)=[z^t]f(z)+[z^t]g(z)$  and $[z^t] 1/(z-z_0)=-1/z_0^{t+1}$ we have
\begin{equation}\label{ans}
[z^t]F(z)= \frac{-B(z_0)/A'(z_0)}{z_0^{t+1}}+\eta_1(t)\text{ where }|\eta_1(t)|\leq Me^{-t\l/2}.
\end{equation}
Thus,  we obtain
\[
[z^t]F(z)
 =
\frac{(1+\xi_2)p_v}{(1+p_v)^{t+1}} + \eta_1(t).
\]
Now
$$\Pr(\ul A_t(v))= \sum_{\t > t} f_{\t}(u \rat v) = \sum_{\t > t} \brac{\frac{(1+\xi_2)p_v}{(1+p_v)^{\t+1}} + \eta_1(\t)} =\frac{1+\xi_2}{(1+p_v)^{t+1}}+\eta_2(t),$$
where
$$\eta_2(t)=\sum_{\t>t}\eta_1(t)\leq \frac{Me^{-\l t/2}}{1-e^{-\l/2}}=o(Te^{-\l/2}).$$
This completes the proof of (\ref{frat}).

Now $M=\max_{z \in C_{\l}} |g(z)|\leq \max |f(z)|+\max |F(z)|=O(T\p_v)+\max |F(z)|=O(\om^{-1})+\max |F(z)|$, where $F(z)=B(z)/A(z)$. On $C_{\l}$ we have, using \eqref{12}-\eqref{bs},
\[
|F(z)| \leq \frac{\p_vz^T+o(\p_v)}{\p_vz^T+\l(|R_T(z)|-O(\om^{-2}))}=O\bfrac{\p_vz^T}{T^{-1}R_v}=O(\om^{-1}).
\]

We now prove that $z_0$ is the  only   zero of $A(z)$ inside the circle $C_{\l}$ and this implies that $F(z)-f(z)$ is
analytic inside $C_\l$. We  use  Rouch\'e's Theorem (see e.g. \cite{BC}), the statement of which is as follows:
{\em Let two functions $\f(z)$ and $\g(z)$ be analytic inside and on a simple closed contour $C$. Suppose that $|\f(z)|>|\g(z)|$ at each point of $C$, then $\f(z)$ and $\f(z)+\g(z)$ have the same number of zeroes, counting
multiplicities, inside $C$.}

Let the functions $\f(z),\g(z)$ be given by  $\f(z)=(1-z)R_T(z)$ and $\g(z)= \p_vz^T+(1-z)\hR_T(z)$.
$$\frac{|\g(z)|}{|\f(z)|}\leq\frac{\p_v(1+\l)^T}{\l\theta}+\frac{|\hR_T(z)|}{\theta}=o(1).$$
As $\f(z)+\g(z)=A(z)$ we conclude that $A(z)$ has only one zero inside the circle $C_{\l}$. This is the simple
zero at $z_0$.


\begin{thebibliography}{99}

\bibitem{AF} D. Aldous and J. Fill. Reversible Markov Chains and Random Walks on Graphs, http://stat-www.berkeley.edu/pub/users/aldous/RWG/book.html.

\bibitem{AKLLR} R. Aleliunas, R.M. Karp, R.J. Lipton, L. Lov\'asz and C. Rackoff, Random Walks, Universal Traversal Sequences, and the Complexity of Maze Problems. {\em Proceedings of the 20th Annual IEEE Symposium on Foundations of Computer Science} (1979) 218-223.

\bibitem{BC} J. Brown and R. Churchill, {\em Complex Variables and Applications,} (Sixth Edition)  McGraw-Hill (1996).

\bibitem{chan} A.K. Chandra, P. Raghavan, W.L. Ruzzo and R. Smolensky, The electrical resistance of a graph captures its commute and cover times, Proceedings of 21st. ACM Symposium on Theory of Computing, (1989) 574-586.

\bibitem{CFreg}  C. Cooper and A. M. Frieze, The cover time of random regular graphs, {\em SIAM Journal on Discrete Mathematics}, 18 (2005) 728-740.

\bibitem{CFweb} C. Cooper and A. M. Frieze, The cover time of the preferential attachment graph. {\em Journal of Combinatorial Theory Series B}, Series B, 97(2) (2007) 269-290.

\bibitem{CFC} C. Cooper and A. M. Frieze, The cover time of sparse random graphs,
{\em Random Structures and Algorithms} 30 (2007) 1-16.

\bibitem{CFgiant} C. Cooper and A. M. Frieze, The cover time of the giant component of a random graph, {\em Random Structures and Algorithms}, 32, 401-439 (2008).

\bibitem{Dnp} C. Cooper and A. M. Frieze, The cover time of random digraphs, Stationary distribution and cover time of random walks on random digraphs, {\em Journal of Combinatorial Theory B} 102 (2012) 329-362.

\bibitem{CFgeo} C. Cooper and A. M. Frieze, The cover time of random geometric graphs, {\em Random Structures and Algorithms} 38 (2011) 324-349.

\bibitem{Hyper} C. Cooper, A.M. Frieze and T. Radzik, The cover time of random walks on random uniform hypergraphs, {\em Proceedings of SIROCCO 2011}, 210-221.

\bibitem{Ding} J. Ding, Asymptotic of cover times via Gaussian free fields: Bounded degree graphs and general trees, {\em  The Annals of Probability} 42 (2014) 464-496.

\bibitem{DLP} J. Ding, J.R. Lee and Y. Peres, Cover times, blanket times, and majorizing measures, {\em Annals of Mathematics} 175 (2012) 1409-1471.

\bibitem{Feige1} U. Feige, A tight upper bound for the cover time of random walks on graphs, {\em Random Structures and Algorithms}, 6 (1995) 51-54.

\bibitem{Feige2} U. Feige, A tight lower bound for the cover time of random walks on graphs, {\em Random Structures and Algorithms}, 6 (1995) 433-438.

\bibitem{KKLV} J. Kahn, J.H. Kim, L. Lov\'asz and V. Vu, The cover time, the blanket time, and the Matthews bound, in {\em 41st Annual IEEE Symposium on Foundations of Computing} (Redondo Beach, CA) (2000).

\bibitem{KLNS} J. Kahn, N. Linial, N. Nisan, M. Saks,
 On the Cover Time of Random Walks on Graphs, Journal of Theoretical Probability, Vol. 2, No. 1, (1989).

\bibitem{LPW} D. Levin, Y. Peres and E. Wilmer, Markov Chains and Mixing Times: Second Edition, American Mathematical Society, (2017).

\bibitem{LR} T. Leighton and S. Rao, Multicommodity max-flow min-cut theorems and their use in designing approximation algorithms, {\em Journal of the ACM} 46 (1999) 787-832

\bibitem{Matt} P. Matthews, Covering problems for Brownian motion on spheres, {\em Annals of Probability} 16 (1988) 189-199.

\bibitem{Meka} R. Meka, A polynomial time approximation scheme for computing the supremum of Gaussian processes,
    {\em Annals of Applied Probability} 25 (2015) 465-476.

\bibitem{Paulin} D. Paulin, Concentration inequalities for Markov chains by Marton couplings and spectral methods, Electronic Journal of Probability, (2015).

\bibitem{Wi} H. Wilf, {\em Generatingfunctionology,}  Academic Press (1990).

\bibitem{Z} A. Zhai, Exponential concentration of cover times.
Electron. J. Probab., Volume 23 (2018), paper no. 32, 22 pp.
\end{thebibliography}
\end{document}